\theoremstyle{plain}
\newtheorem{lemma}{Lemma}
\newtheorem{problem}{Problem}
\newtheorem{theorem}{Theorem}
\numberwithin{equation}{section}
\begin{document}
\title[Measurable colorings of the real line]{Splitting necklaces and
measurable colorings of the real line}
\author{Noga Alon}
\address{Schools of Mathematics and Computer Science, Raymond and Beverly
Sacler Faculty of Exact Sciences, Tel Aviv University, Tel Aviv 69978,
Israel and IAS, Princeton, NJ 08540, USA }
\email{nogaa@tau.ac.il}
\author{Jaros\l aw Grytczuk}
\address{Theoretical Computer Science Department, Faculty of Mathematics and
Computer Science, Jagiellonian University, 30-387 Krak\'{o}w, Poland}
\email{grytczuk@tcs.uj.edu.pl}
\author{Micha\l\ Laso\'{n}}
\address{Faculty of Mathematics and Computer Science, Jagiellonian
University, 30-387 Krak\'{o}w, Poland}
\email{mlason@op.pl}
\author{Mateusz Micha\l ek}
\address{Faculty of Mathematics and Computer Science, Jagiellonian
University, 30-387 Krak\'{o}w, Poland}
\email{wajcha2@poczta.onet.pl}
\subjclass[2000]{Primary 05C38, 15A15; Secondary 05A15, 15A18}
\keywords{measurable coloring, splitting necklaces}

\begin{abstract}
A (continuous) \emph{necklace} is simply an interval of the real line
colored measurably with some number of colors. A well-known application of the
Borsuk-Ulam theorem asserts that every $k$-colored necklace can be fairly
split by at most $k$ cuts (from the resulting pieces one can form two
collections, each capturing the same measure of every color). Here we prove
that for every $k\geq 1$ there is a measurable $(k+3)$-coloring of the real
line such that no interval can be fairly split using at most $k$ cuts. In
particular, there is a measurable $4$-coloring of the real line in which no
two adjacent intervals have the same measure of every color. An analogous
problem for the integers was posed by Erd\H{o}s in 1961 and solved in the
affirmative in 1991 by Ker\"{a}nen. Curiously, in the discrete case the
desired coloring also uses four colors.
\end{abstract}

\maketitle

\section{Introduction}

In 1906 Thue \cite{Thue1}\ proved that there is a $3$-coloring of the
integers such that no two adjacent intervals are colored exactly the same.
This result has had lots of unexpected applications in distinct areas of
mathematics and theoretical computer science (cf. \cite{AlloucheAutom}, \cite%
{BeanAvoid}, \cite{Choffrut}, \cite{Lothaire1}). Many variations and
generalizations of this property were considered so far, specifically in
other combinatorial settings like Euclidean spaces \cite{BeanAvoid}, \cite%
{GrytczukArs}, \cite{GrytczukSliwa}, or graph colorings \cite{AlonNonrep}, 
\cite{AlonGrytczuk}, \cite{GrytczukIJMMS}.

In particular, in 1961 Erd\H{o}s \cite{ErdosStrong} (cf. \cite{CurrieOpen}, 
\cite{DekkingStrong}, \cite{GrytczukDM}) asked whether there is a $4$%
-coloring of the integers such that no two adjacent segments are identical,
even after arbitrary permutation of their terms. (In other words, there is
always a color whose number of occurrences in one segment differs from those in other segments.) It is not hard to check by hand that four colors are needed
for this property, but on the other hand, it is not obvious that any finite
number of colors is enough. This fact was established first by Evdokimov 
\cite{Evdokimov} who found a $25$-coloring with the desired property.
Another construction, provided by Pleasants \cite{Pleasants}, improved the
bound to $5$. That $4$ colors actually suffice was finally proved by Ker\"{a}%
nen \cite{Keranen}, with some verifications made by computer.

In the present paper we study a continuous variant of the Erd\H{o}s problem.
In particular, we prove that there exists a measurable $4$-coloring of the
real line such that no two adjacent segments contain equal measure of every
color. Actually our result is more general and relates to the continuous
version of the \emph{necklace splitting} problem. Let $f:\mathbb{R}%
\rightarrow \{1,2,\ldots ,k\}$ be a $k$-coloring of the real line such that
for every $i\in\{1,2,\ldots ,k\}$, the set $f^{-1}(i)$ of all points in color $i$
is Lebesgue measurable. A \emph{splitting} of size $r$ of an interval $[a,b]$
is a sequence of points $a=y_{0}<y_{1}<\ldots<y_{r}<y_{r+1}=b$ such that it
is possible to partition the resulting collection of intervals $F=\{[y_{i},y_{i+1}]:0\leq i\leq r\}$ into two disjoint subcollections $F_{1}$
and $F_{2}$, each capturing exactly half of the the total measure of every
color. The partition $F=F_{1}\cup F_{2}$ will be called a \emph{fair}
partition of $F$. For instance, in the continuous analog of the Erd\H{o}s
problem, intervals with splitting of size one are forbidden.

Goldberg and West \cite{GW} proved that every $k$-colored interval has a
splitting of size at most $k$ (see also \cite{AlonWest} for a short proof
using the Borsuk-Ulam theorem, and \cite{Matousek} for other applications of
the Borsuk-Ulam theorem in combinatorics). This result is clearly best
possible, as can be seen in a necklace where colors occupy consecutively full
intervals. Our result goes the other direction and provides an upper bound
for the number of colors in a general version of the Erd\H{o}s problem on
the line.

\begin{theorem}
\label{Main}For every $k\geq 1$ there is a $(k+3)$-coloring of the real line
such that no interval has a splitting of size at most $k$.
\end{theorem}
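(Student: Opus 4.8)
The plan is to construct the coloring explicitly using a periodic or self-similar pattern and then show that any interval, when cut at most $k$ times, leaves behind an obstruction to fair partition. The key idea is a counting/dimension argument: a splitting of size $k$ produces $k+1$ pieces, and a fair partition asks that a subcollection capture exactly half the measure of each of the $k+3$ colors — that is $k+3$ linear equations. With only $k+1$ pieces to distribute and a parity/combinatorial constraint at the endpoints, the system should be overdetermined in a way we can exploit. More concretely, I would associate to each interval $[a,b]$ and each proposed cut sequence a vector in $\mathbb{R}^{k+3}$ recording, for each color, the difference (measure in $F_1$) minus (measure in $F_2$); a fair partition forces this vector to be zero, and I want to build $f$ so that this is impossible.

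**First I would** reduce to a clean local statement. Since a splitting of size $\le k$ uses at most $k$ cut points, those cuts lie in some bounded window; by making the coloring highly structured (for instance, built from a fixed finite "gadget" word repeated or scaled along $\mathbb{R}$, analogous to the Thue–Morse-type constructions in the discrete case cited in the introduction), it suffices to defeat splittings inside each gadget and across gadget boundaries. The natural building block is a pattern in which, between consecutive potential cut locations, the color measures are arranged so that no signed $\pm1$ combination of at most $k+1$ consecutive blocks can vanish in all $k+3$ coordinates. Three extra colors beyond $k$ is exactly the slack one expects: one color to kill configurations with "too few" cuts on one side, and a pair of colors (or a single cleverly weighted color) to force an imbalance whenever the cuts are few relative to the color count. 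I would use colors that appear on sets of incommensurable (rationally independent) measures within each block, so that the only way a signed sum of block-contributions can be zero is the trivial way, which the combinatorics of $k+1$ pieces then forbids.

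**The hard part will be** handling cuts that fall in the "middle" of a monochromatic stretch rather than at block boundaries — a cut inside a single color contributes to only one coordinate of the difference vector and gives the adversary a free real parameter. To control this I would argue that such an interior cut can be "pushed" to the nearest boundary without changing feasibility of a fair partition (a continuity/exchange argument: sliding the cut changes the two adjacent pieces' measures continuously in that one color only, and one can re-balance), thereby reducing to the case where all cuts sit at the designed boundaries, where the rational-independence obstruction bites. Making this reduction rigorous — showing the adversary gains nothing from fractional cuts and that crossing gadget boundaries does not create new cancellations — is where the real work lies; the rest is bookkeeping with the $k+3$ linear constraints against the $\le k+1$ available pieces. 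Once the boundary case is pinned down, the theorem follows by choosing the period of the gadget larger than the maximum spread of $k$ cuts so that every interval either contains a full gadget (and inherits the obstruction) or is too short to admit $k$ meaningful cuts at all.
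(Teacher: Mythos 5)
Your plan is fundamentally different from the paper's and, more importantly, it contains a fatal flaw at its core: a periodic coloring cannot work. If $f$ has period $T$, then for any $a$ the interval $[a,a+2T]$ admits a splitting of size $1$ — cut at $a+T$ and take $F_{1}=\{[a,a+T]\}$, $F_{2}=\{[a+T,a+2T]\}$; periodicity makes this fair. Your concluding remark, that one should ``choose the period of the gadget larger than the maximum spread of $k$ cuts,'' misreads the statement: the interval $[a,b]$ is unconstrained, so the $k$ cut points may lie arbitrarily far apart, and no finite period can outrun them. Any construction based on repeating a fixed block over $\mathbb{R}$ is therefore dead on arrival for $k\geq 1$.

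The paper avoids explicit construction altogether and runs a Baire category argument on the complete metric space $\mathcal{M}$ of measurable $k$-colorings (metric built from normalized symmetric-difference measures on $[-n,n]$, weighted by $2^{-(n+1)}$). The bad colorings are sliced into sets $\mathcal{B}_{n}^{(r)}$ (colorings admitting, inside $[-n,n]$, an interval with a size-$r$ splitting of granularity at least $1/n$); a compactness argument shows each $\mathcal{B}_{n}^{(r)}$ is closed, and a density argument shows each has empty interior, so their countable union is first category and a good coloring exists. Two of your ideas do resurface there, but in a more workable form. First, the rational-independence trick: the paper perturbs an interval coloring by inserting, in each cell, tiny subintervals of all $k$ colors with generically rationally independent lengths. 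Second, the place where $k\geq t+3$ is used is not a count of equations versus pieces, but the observation that among $k$ colors there must be one, call it $s$, that avoids all $r+2\leq t+2$ endpoints of the cuts; every monochromatic-$s$ interval then sits wholly inside one piece, which is exactly what makes the counting of ``large'' intervals and the rational-independence of the ``small'' ones bite. This neatly dissolves your ``hard part'' about cuts landing mid-stretch: rather than pushing cuts to boundaries (a reduction you never justify and which would need care, since re-balancing one color can unbalance another), one simply selects a color the cuts never touch. So even setting aside the periodicity error, your outline leaves the genuine difficulties — why fractional cuts gain the adversary nothing, and why crossing gadget boundaries creates no new cancellations — unaddressed, whereas the paper's choice of an untouched color and a generic perturbation handles both at once.
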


The proof is based on the Baire category theorem applied to the space of all
measurable colorings of $\mathbb{R}$, equipped with a suitable metric
(Section 2). By the same argument one may obtain other versions of the
result. For instance, one of these asserts that there is a $5$-coloring of $%
\mathbb{R}$ such that no two intervals (not necessarily adjacent) contain
the same measure of every color (Section 3). We do not know whether the
bound in Theorem \ref{Main} is optimal, even in the simplest case of $k=1$, though
one can show that two colors are no enough to avoid intervals with splitting
of size one.

\section{Proof of the main result}

Recall that a set in a metric space is \emph{nowhere dense} if the interior
of its closure is empty. A set is said to be of \emph{first category} if it
can be represented as a countable union of nowhere dense sets. In the proof
of Theorem \ref{Main} we apply the Baire category theorem in the following
version (cf. \cite{Oxtoby}).

\begin{theorem}[Baire] If $X$ is a complete metric space and $A$ is a set of first
category in $X$ then $X\setminus A$ is dense in $X$ (and in particular is
nonempty).
\end{theorem}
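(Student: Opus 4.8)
The plan is to establish density directly: given an arbitrary nonempty open set $U\subseteq X$, I will exhibit a point of $U$ that lies outside $A$. Since $U$ is arbitrary this shows that $X\setminus A$ meets every nonempty open set, hence is dense, and specializing to $U=X$ gives that $X\setminus A$ is nonempty.

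First I would reduce to the case of closed sets. Write $A=\bigcup_{n\ge 1}A_n$ with each $A_n$ nowhere dense, and replace each $A_n$ by its closure $\overline{A_n}$; by definition $A_n$ is nowhere dense precisely when $\operatorname{int}(\overline{A_n})=\emptyset$, so each $\overline{A_n}$ is a closed set with empty interior, and a point avoiding every $\overline{A_n}$ a fortiori avoids every $A_n$. So I may assume from now on that each $A_n$ is closed with $\operatorname{int}(A_n)=\emptyset$.

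The heart of the argument is the construction of a nested sequence of closed balls whose radii tend to $0$. Since $A_1$ is closed with empty interior, $U\setminus A_1$ is nonempty and open, so I can choose $x_1\in U\setminus A_1$ and $0<r_1<1$ with $\overline{B}(x_1,r_1)\subseteq U\setminus A_1$. Inductively, suppose $x_n$ and $r_n$ have been chosen. The open ball $B(x_n,r_n)$ is nonempty, and as $A_{n+1}$ has empty interior it cannot contain $B(x_n,r_n)$, so $B(x_n,r_n)\setminus A_{n+1}$ is nonempty and open; I pick $x_{n+1}$ in it and $0<r_{n+1}<r_n/2$ small enough that $\overline{B}(x_{n+1},r_{n+1})\subseteq B(x_n,r_n)\setminus A_{n+1}$. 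This yields closed balls with $\overline{B}(x_{n+1},r_{n+1})\subseteq\overline{B}(x_n,r_n)$ for all $n$, with $\overline{B}(x_n,r_n)$ disjoint from $A_n$ for every $n\ge 1$, with $\overline{B}(x_1,r_1)\subseteq U$, and with $r_n<2^{-n+1}\to 0$.

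Finally I invoke completeness to extract the desired point. For $m\ge n$ we have $x_m\in\overline{B}(x_n,r_n)$, so $d(x_m,x_n)\le r_n\to 0$ and the sequence $(x_n)$ is Cauchy; let $x=\lim_{n\to\infty}x_n$, which exists since $X$ is complete. For each fixed $n$, all terms $x_m$ with $m\ge n$ lie in the closed set $\overline{B}(x_n,r_n)$, hence so does the limit, i.e.\ $x\in\overline{B}(x_n,r_n)$ for every $n$. In particular $x\in\overline{B}(x_1,r_1)\subseteq U$, and $x\notin A_n$ for every $n$ since $\overline{B}(x_n,r_n)\cap A_n=\emptyset$; therefore $x\in U\setminus A$. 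The only real subtlety is the inductive step: one must use ``empty interior'' to be sure a new closed ball can be squeezed inside the previous open ball while dodging the next $A_n$, and then use completeness (equivalently, Cantor's nested-set property) to produce the limit point lying in all of them — the rest is routine bookkeeping with balls and closures.
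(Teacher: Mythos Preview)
Your argument is correct and is the standard textbook proof of Baire's theorem: reduce to closed nowhere dense sets, build a nested sequence of closed balls of shrinking radii that successively dodge each $A_n$, and use completeness to get a limit point lying in all of them. The details are handled cleanly, including the point that a closed ball of half the radius sits inside the corresponding open ball, which is what lets you pass from the open set $B(x_n,r_n)\setminus A_{n+1}$ to a closed ball contained in it.

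There is nothing to compare against here: the paper does not prove this statement at all. It is quoted as the classical Baire category theorem with a reference to Oxtoby, and is then \emph{applied} (in combination with Lemmas~\ref{M-complete}, \ref{Bn-closed}, and \ref{EmptyInt}) to deduce Theorem~\ref{Main}. So your write-up supplies a self-contained proof where the paper simply invokes the result.
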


Our plan is to construct a suitable metric space of colorings and
demonstrate that the subset of \emph{bad} colorings is of first category.

\subsection{The setting}

Let $k$ be a fixed positive integer and let $\{1,2,\ldots ,k\}$ be the set
of colors. Let $f,g$ be two measurable $k$-colorings of $\mathbb{R}$. Let $n$
be another positive integer and consider the set 
\begin{equation*}
D_{n}(f,g)=\{x\in \lbrack -n,n]:f(x)\neq g(x)\}.
\end{equation*}%
Clearly $D_n(f,g)$ is measurable and we may define the normalized distance
between $f$ and $g$ on $[-n,n]$ by%
\begin{equation*}
d_{n}(f,g)=\frac{\lambda (D_{n}(f,g))}{n},
\end{equation*}%
where $\lambda(D)$ is the Lebesgue measure of $D$. Then we may define the
distance between any two measurable colorings $f$ and $g$ by%
\begin{equation*}
d(f,g)=\sum\limits_{n=1}^{\infty }\frac{d_{n}(f,g)}{2^{n+1}}.
\end{equation*}%
Identifying colorings whose distance is zero gives the metric space $%
\mathcal{M}$ of equivalence classes of all measurable $k$-colorings. Clearly
equivalent colorings preserve the anti-splitting properties we look for.

\begin{lemma}
\label{M-complete}$\mathcal{M}$ is a complete metric space.
\end{lemma}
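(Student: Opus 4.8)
The plan is to verify the two things that need checking: first that $d$ is genuinely a metric on the set of equivalence classes, and then that every Cauchy sequence converges. The metric axioms are routine: symmetry is immediate, and the triangle inequality follows because for each fixed $n$ one has $D_n(f,h)\subseteq D_n(f,g)\cup D_n(g,h)$, hence $d_n(f,h)\le d_n(f,g)+d_n(g,h)$, and summing the weighted inequalities preserves this. Separation holds precisely because we pass to equivalence classes: $d(f,g)=0$ forces $\lambda(D_n(f,g))=0$ for every $n$, so $f=g$ almost everywhere on each $[-n,n]$ and therefore almost everywhere on $\mathbb{R}$. Convergence of the defining series is not an issue since $d_n(f,g)\le 2n/n=2$, so the series is dominated by $\sum 2/2^{n+1}<\infty$.

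For completeness, let $(f_m)$ be a Cauchy sequence in $\mathcal{M}$. The key observation is that convergence in $d$ controls convergence in each $d_n$: from $d(f_m,f_{m'})\ge d_n(f_m,f_{m'})/2^{n+1}$ it follows that for each fixed $n$ the sequence $(f_m)$ is Cauchy for the pseudometric $d_n$, i.e. for the $L^1$-type distance $\lambda(\{x\in[-n,n]:f_m(x)\neq f_{m'}(x)\})$. I would then extract a single subsequence $(f_{m_j})$ that converges rapidly, say with $d(f_{m_j},f_{m_{j+1}})<2^{-j}$, so that for every $n$ one has $\sum_j \lambda\big(D_n(f_{m_j},f_{m_{j+1}})\big)<\infty$. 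By the Borel--Cantelli lemma, for almost every $x\in[-n,n]$ the values $f_{m_j}(x)$ are eventually constant; letting $n\to\infty$ through the integers, this holds for almost every $x\in\mathbb{R}$. Define $f(x)$ to be that eventual value (and, say, colour $1$ on the exceptional null set). Since each $f_{m_j}$ is measurable and $f$ is their pointwise a.e.\ limit, $f$ is a measurable $k$-colouring.

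It remains to check $d(f_m,f)\to 0$. On $[-n,n]$, the set where $f_{m_j}(x)\neq f(x)$ is contained in $\bigcup_{i\ge j} D_n(f_{m_i},f_{m_{i+1}})$ up to a null set, whose measure is at most $\sum_{i\ge j}\lambda\big(D_n(f_{m_i},f_{m_{i+1}})\big)\to 0$ as $j\to\infty$; hence $d_n(f_{m_j},f)\to 0$ for each $n$, and then $d(f_{m_j},f)\to 0$ by dominated convergence in the series (again using the uniform bound $d_n\le 2$). Finally, a Cauchy sequence with a convergent subsequence converges to the same limit, so $f_m\to f$ in $\mathcal{M}$. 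The only mildly delicate point is handling all the $[-n,n]$ simultaneously; choosing one rapidly convergent subsequence and invoking Borel--Cantelli on each window, then taking a union over $n$, sidesteps any diagonalization headache.
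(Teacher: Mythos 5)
Your proof is correct and is precisely the argument the paper leaves implicit: the authors do not prove the lemma directly, but refer to the standard completeness result for the space of finite-measure sets with the symmetric-difference metric (Oxtoby), of which this is a routine generalization. Your rapid-subsequence plus Borel--Cantelli argument is exactly that standard proof carried out in detail, and you correctly handle the one wrinkle specific to this setting, namely that a single fast subsequence works simultaneously on every window $[-n,n]$ since the exceptional sets form a countable union of null sets.
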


The lemma is a straightforward generalization of the fact that sets of
finite measure in any measure space form a complete metric space, with
symmetric difference of sets as the distance function (see \cite%
{Oxtoby}).

Let $t\geq 1$ be a fixed integer. Let $\mathcal{D}_{t}$ be the subspace of $%
\mathcal{M}$ consisting of those $k$-colorings that avoid intervals having a
splitting of size at most $t$. Our task is to show that $\mathcal{D}_{t}$ is
not empty provided $t\leq k-3$. By \emph{granularity} of a splitting we mean
the length of the shortest subinterval in the splitting. For $r,n\geq 1$ let 
$\mathcal{B}_{n}^{(r)}$ be the set of those colorings from $\mathcal{M}$ for
which there exists at least one interval contained in $[-n,n]$ having a
splitting of size exactly $r$ and granularity at least $1/n$. Finally let 
\begin{equation*}
\mathcal{B}_{n}(t)=\bigcup\limits_{r=1}^{t}\mathcal{B}_{n}^{(r)}.
\end{equation*}
These are the bad colorings. Clearly we have 
\begin{equation*}
\mathcal{D}_{t}=\mathcal{M}\setminus \bigcup\limits_{n=1}^{\infty }\mathcal{B%
}_{n}(t).
\end{equation*}
So, our aim is to show that the sets $\mathcal{B}_{n}(t)$ are nowhere dense,
provided $k\geq t+3$, and hence the union $\bigcup\limits_{n=1}^{\infty }%
\mathcal{B}_{n}(t)$ is of first category.

\subsection{The sets $\mathcal{B}_{n}(t)$ are closed}

We show that each set $\mathcal{B}_{n}^{(r)}$ is a closed subset of $%
\mathcal{M}$. Since $\mathcal{B}_{n}(t)$ is a finite union of these sets, it
follows it is closed too. For any family $F$ of measurable subsets of $%
\mathbb{R}$ and a coloring $f$, we denote by $\lambda _{i}(f,F)$ the measure
of color $i$ in the union of all members of $F$.

\begin{lemma}
\label{Bn-closed}For every $r,n\geq 1$, the set $\mathcal{B}_{n}^{(r)}$ is a
closed subset of the space $\mathcal{M}$.
\end{lemma}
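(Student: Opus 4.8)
The plan is to take a sequence of colorings $(f_m)$ in $\mathcal{B}_n^{(r)}$ converging to some $f\in\mathcal{M}$, and to produce from the data witnessing $f_m\in\mathcal{B}_n^{(r)}$ a single interval with a size-$r$, granularity-$\geq 1/n$ splitting that is fair for $f$. For each $m$ we have an interval $[a^{(m)}_0,a^{(m)}_{r+1}]\subseteq[-n,n]$ and cut points $a^{(m)}_0<a^{(m)}_1<\cdots<a^{(m)}_{r+1}$ with consecutive gaps at least $1/n$, together with a subset $S\subseteq\{0,1,\dots,r\}$ (independent of $m$ after passing to a subsequence, since there are finitely many choices) such that $F_1^{(m)}=\{[a^{(m)}_j,a^{(m)}_{j+1}]:j\in S\}$ and its complement $F_2^{(m)}$ form a fair partition for $f_m$, i.e. $\lambda_i(f_m,F_1^{(m)})=\lambda_i(f_m,F_2^{(m)})$ for every color $i$. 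The tuple $(a^{(m)}_0,\dots,a^{(m)}_{r+1})$ lives in the compact set $[-n,n]^{r+2}$, so by passing to a further subsequence it converges to a limit $(a_0,\dots,a_{r+1})$; the gap condition $a^{(m)}_{j+1}-a^{(m)}_j\geq 1/n$ passes to the limit, so $a_0<a_1<\cdots<a_{r+1}$ is a genuine splitting of size $r$ and granularity $\geq 1/n$ of the interval $[a_0,a_{r+1}]\subseteq[-n,n]$.

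The heart of the matter is to show the fairness condition survives the limit, i.e. $\lambda_i(f,F_1)=\lambda_i(f,F_2)$ where $F_1=\{[a_j,a_{j+1}]:j\in S\}$. I would estimate $|\lambda_i(f,F_1)-\lambda_i(f_m,F_1^{(m)})|$ by splitting it through the intermediate quantity $\lambda_i(f,F_1^{(m)})$ (same coloring, moving endpoints) and $\lambda_i(f_m,F_1^{(m)})$ (same endpoints, moving coloring). The second difference is controlled by the metric: since $\lambda_i(f,F_1^{(m)})$ and $\lambda_i(f_m,F_1^{(m)})$ differ only on the set where $f$ and $f_m$ disagree, their difference is at most $\lambda(D_n(f,f_m))$, which tends to $0$ because $d(f_m,f)\to 0$ forces $d_n(f_m,f)\to 0$ (each term of the series $d$ is nonnegative, so convergence of $d$ gives convergence of each $d_n$). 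The first difference is controlled by the convergence of the endpoints: $\lambda_i(f,F_1^{(m)})$ and $\lambda_i(f,F_1)$ differ only on the symmetric difference of the two regions $\bigcup_{j\in S}[a^{(m)}_j,a^{(m)}_{j+1}]$ and $\bigcup_{j\in S}[a_j,a_{j+1}]$, and that symmetric difference has measure at most $2\sum_{j}(|a^{(m)}_j-a_j|+|a^{(m)}_{j+1}-a_{j+1}|)\to 0$. Hence $\lambda_i(f,F_1)=\lim_m\lambda_i(f_m,F_1^{(m)})$, and likewise $\lambda_i(f,F_2)=\lim_m\lambda_i(f_m,F_2^{(m)})$; since $\lambda_i(f_m,F_1^{(m)})=\lambda_i(f_m,F_2^{(m)})$ for all $m$, the limits agree, so the partition is fair for $f$ and $f\in\mathcal{B}_n^{(r)}$.

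The main obstacle I anticipate is purely bookkeeping: one must be careful that the limiting cut points really are strictly increasing (this is exactly what the granularity bound $1/n$ buys us — without it the limit could collapse cuts and change the size of the splitting), and that the representative $f$ chosen from its equivalence class does not matter, which is immediate since the whole argument is phrased via $\lambda_i$ and $d_n$, both of which are invariant under modification on a null set. Finally, $\mathcal{B}_n(t)=\bigcup_{r=1}^t\mathcal{B}_n^{(r)}$ is a finite union of closed sets, hence closed.
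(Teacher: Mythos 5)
Your proof is correct and follows essentially the same route as the paper's: pass to a subsequence where the index pattern $S$ is constant, use compactness of $[-n,n]^{r+2}$ to get convergent cut points, note the granularity bound survives the limit, and then control fairness via the two-step estimate (change the coloring with the endpoints fixed; change the endpoints with the coloring fixed). The only cosmetic difference is that you argue the limit directly while the paper phrases the same two-step estimate as a proof by contradiction.
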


\begin{proof}
Fix $n,r\geq 1$. Let $f_{m}$ be an infinite sequence of colorings from $%
\mathcal{B}_{n}^{(r)}$ tending to the limit coloring $f$. Let $%
I^{(m)}=[a^{(m)},b^{(m)}]$ be a sequence of intervals in $[-n,n]$, each
having a splitting $a^{(m)}=y_{0}^{(m)}\leq y_{1}^{(m)}\leq \ldots \leq
y_{r}^{(m)}\leq y_{r+1}^{(m)}=b^{(m)}$ of granularity at least $1/n$. Let $%
F^{(m)}=\{[y_{i}^{(m)},y_{i+1}^{(m)}]:0\leq i\leq r\}$ be the resulting
family of intervals and let $F^{(m)}=F_{1}^{(m)}\cup F_{2}^{(m)}$ be the
related fair partition of $F^{(m)}$. Finally, let $\{0,1,\ldots
,r\}=A^{(m)}\cup B^{(m)}$ be the associated partition of indices, that is $%
F_{1}^{(m)}=\{[y_{i}^{(m)},y_{i+1}^{(m)}]:i\in A^{(m)}\}$ and $%
F_{2}^{(m)}=\{[y_{i}^{(m)},y_{i+1}^{(m)}]:i\in B^{(m)}\}$.

Since there are only finitely many index patterns $(A^{(m)},B^{(m)})$, one
of them must appear infinitely many times in the sequence. Hence, without
loss of generality, we may assume that all the $A^{(m)}$ and all the $B^{(m)}$ are
equal, say $A^{(m)}=A$ and $B^{(m)}=B$ for every $m\geq 1$. Since $[-n,n]$
is compact, there are subsequences of the $r+1$ sequences $y_{i}^{(m)}$, $%
0\leq i\leq r+1$, convergent to some points $y_{i}\in \lbrack -n,n]$,
respectively. Clearly the splitting $a=y_{0}\leq y_{1}\leq \ldots \leq
y_{r}\leq y_{r+1}=b$ has granularity at least $1/n$. Moreover, we claim that
the related fair partition $F=F_{1}\cup F_{2}$ of the family $%
F=\{[y_{i},y_{i+1}]:0\leq i\leq r\}$ has the same index pattern $(A,B)$.
Indeed, if this is not the case then there is a color $i$ such that%
\begin{equation*}
\left\vert \lambda _{i}(f,F_{1})-\lambda _{i}(f,F_{2})\right\vert
>\varepsilon ,
\end{equation*}%
for some $\varepsilon >0$. Taking $m$ large enough we can make the distances 
$d(f_{m},f)$ arbitrarily small, so that%
\begin{equation*}
\left\vert \lambda _{i}(f_{m},F_{1})-\lambda _{i}(f_{m},F_{2})\right\vert
>\varepsilon _{1}
\end{equation*}%
for some $\varepsilon _{1}>0$. Now, for sufficiently large $m$, the
symmetric difference between the unions of intervals in $F_{1}$ and $%
F_{1}^{(m)}$ can also be made arbitrarily small, hence%
\begin{equation*}
\left\vert \lambda _{i}(f_{m},F_{1}^{(m)})-\lambda
_{i}(f_{m},F_{2}^{(m)})\right\vert >\varepsilon _{2},
\end{equation*}%
for some $\varepsilon _{2}>0$. This contradicts the assumption that $%
F^{(m)}=F_{1}^{(m)}\cup F_{2}^{(m)}$ is a fair partition. Consequently, the
limit coloring $f$ must be in $\mathcal{B}_{n}^{(r)}$.
\end{proof}

\subsection{The sets $\mathcal{B}_{n}(t)$ have empty interiors}

Next we prove that each $\mathcal{B}_{n}(t)$ has empty interior provided the
number of colors $k$ is at least $t+3$. For that purpose, let us call $f$ an 
\emph{interval }coloring on $[-n,n]$ if there is a partition of the segment $%
[-n,n]$ into some number of (half-open) intervals of equal length, each
filled with only one color. Let $\mathcal{I}_{n}$ denote the set of all
colorings from $\mathcal{M}$ that are interval colorings on $[-n,n]$.

\begin{lemma}
\label{IntervalColoring}Let $f\in \mathcal{M}$ be a coloring. Then for every 
$\varepsilon >0$ and $n\in \mathbb{N}$ there exists a coloring $g\in 
\mathcal{I}_{n}$ such that $d(f,g)<\varepsilon $.
\end{lemma}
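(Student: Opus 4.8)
The plan is to approximate the coloring $f$ on $[-n,n]$ by a step function (interval coloring) and then control the tail contribution from outside $[-n,n]$ to make the total distance small. First I would recall that each set $f^{-1}(i)\cap[-n,n]$ is Lebesgue measurable, so by the standard regularity of Lebesgue measure it can be approximated in measure by a finite union of intervals; more conveniently, simple functions are dense in $L^{1}$, so there is a step function on $[-n,n]$ agreeing with $f$ outside a set of small measure. To make the pieces have \emph{equal} length, I would fix a large integer $N$ and partition $[-n,n]$ into $2nN$ half-open intervals of length $1/N$; by the Lebesgue density theorem (or simply by refining the step-function approximation), for $N$ large enough the fraction of these little intervals on which $f$ is \emph{not} almost constant is as small as we like. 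On each little interval assign $g$ the color that $f$ takes on the majority of that interval (ties broken arbitrarily); this defines a candidate $g\in\mathcal{I}_{n}$, and by construction $\lambda\big(D_{n}(f,g)\big)=\lambda\{x\in[-n,n]:f(x)\neq g(x)\}$ can be made smaller than any prescribed $\delta$.

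Next I would estimate $d(f,g)=\sum_{m=1}^{\infty} d_{m}(f,g)/2^{m+1}$. Split the sum at $m=n$: for $m\le n$ we have $D_{m}(f,g)\subseteq D_{n}(f,g)$, so $d_{m}(f,g)=\lambda(D_{m}(f,g))/m\le \lambda(D_{n}(f,g))/m\le \delta$, contributing at most $\delta\sum_{m=1}^{n}2^{-(m+1)}<\delta$. For the tail $m>n$ I must control $g$ outside $[-n,n]$. The clean choice is to simply let $g=f$ on $\mathbb{R}\setminus[-n,n]$; then for every $m$, $D_{m}(f,g)=D_{\min(m,n)}(f,g)\subseteq D_{n}(f,g)$, so in fact $\lambda(D_{m}(f,g))\le\delta$ for all $m$, and therefore
\begin{equation*}
d(f,g)\le \delta\sum_{m=1}^{\infty}\frac{1}{2^{m+1}}=\frac{\delta}{2}<\varepsilon
\end{equation*}
once $\delta<2\varepsilon$. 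The only subtlety is whether ``$g$ equals $f$ off $[-n,n]$ and is a step function on $[-n,n]$'' still counts as a member of $\mathcal{I}_{n}$: the definition only requires $g$ to be an interval coloring on the segment $[-n,n]$, with no constraint elsewhere, so this is fine; alternatively one could extend the equal-length partition periodically over all of $\mathbb{R}$ with no harm.

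The main obstacle — and really the only place care is needed — is the passage from ``simple functions are dense in $L^{1}$'' to ``the approximating step function can be taken constant on each interval of a uniform equipartition of $[-n,n]$.'' A measurable set need not be well approximated by a union of dyadic-length intervals in a single prescribed grid, but it \emph{is} approximated by \emph{some} finite union of intervals (by outer regularity), and then each such interval's endpoints can be snapped to the grid of mesh $1/N$ at a cost of at most $2/N$ per interval in measure; choosing $N$ large relative to the number of intervals (which depends only on the chosen $L^{1}$-approximation, not on $N$) makes the total snapping error negligible. I would present this as: fix an $L^1$-approximation of the indicator functions $\mathbf{1}_{f^{-1}(i)\cap[-n,n]}$ by finite unions of intervals with total error $<\delta/2$, then refine to the uniform grid at additional cost $<\delta/2$, obtaining $\lambda(D_n(f,g))<\delta$, and conclude as above.
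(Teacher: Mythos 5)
Your proof takes essentially the same route as the paper's: approximate each color class on $[-n,n]$ by a finite union of intervals (regularity of Lebesgue measure), snap to a uniform equipartition, keep $g=f$ outside $[-n,n]$, and bound the metric sum using $D_m(f,g)\subseteq D_n(f,g)$ for all $m$. The one detail the paper handles explicitly that your final formulation elides is that the approximating finite unions $C_i^\ast$ for distinct colors need not be disjoint, so one must assign color $i$ to $C_i^\ast\setminus(C_1^\ast\cup\cdots\cup C_{i-1}^\ast)$ (or use your majority-vote variant, which avoids the issue) in order to actually obtain a coloring.
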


\begin{proof}
Let $C_{i}=f^{-1}(i)\cap \lbrack -n,n]$ and let $C_{i}^{\ast }\subset
\lbrack -n,n]$ be a finite union of intervals such that 
\begin{equation*}
\lambda ((C_{i}^{\ast }\setminus C_{i})\cup (C_{i}\setminus C_{i}^{\ast }))<%
\frac{\varepsilon }{2k^{2}}
\end{equation*}
for each $i=1,2,\ldots ,k$. Define a coloring $h$ so that for each $%
i=1,2,\ldots ,k$, the set $C_{i}^{\ast }\setminus (C_{1}^{\ast }\cup \ldots
\cup C_{i-1}^{\ast })$ is filled with color $i$, the rest of interval $[-n,n]
$ is filled with any of these colors, and $h$ agrees with $f$ everywhere
outside $[-n,n]$. Then $d(f,h)<\varepsilon /2$ and clearly each $%
h^{-1}(i)\cap \lbrack -n,n]$ is a finite union of intervals. Let $%
A_{1},\ldots ,A_{t}$ be the whole family of these intervals. Now split the
interval $[-n,n]$ into $N\geq 8tn/\varepsilon $ intervals $B_{1},\ldots
,B_{N}$ of equal length and define a new coloring $g$ so that $%
g(B_{i})=h(A_{j})$ if $B_{i}\subset A_{j}$, and $g(B_{i})$ is any color
otherwise. Hence, $g$ differs from $h$ on at most $2t$ intervals of total
length $2t2n/N\leq \varepsilon /2$ and we get $d(f,g)\leq \varepsilon $.
\end{proof}

\begin{lemma}
\label{EmptyInt}If $k\geq t+3$ then each set $\mathcal{B}_{n}(t)$ has empty
interior.
\end{lemma}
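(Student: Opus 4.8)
The plan is to combine the two preceding lemmas with a dimension count of transversality type. Since $\mathcal{B}_n(t)=\bigcup_{r=1}^{t}\mathcal{B}_n^{(r)}$ is closed (Lemma \ref{Bn-closed}) and the interval colorings $\mathcal{I}_n$ are dense in $\mathcal{M}$ (Lemma \ref{IntervalColoring}), it suffices to show that arbitrarily close to every $g\in\mathcal{I}_n$ there is a coloring lying outside $\mathcal{B}_n(t)$: given $h\in\mathcal{M}$ and $\varepsilon>0$ one first picks $g\in\mathcal{I}_n$ with $d(h,g)<\varepsilon/2$ and then a good coloring within $\varepsilon/2$ of $g$. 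So fix $g\in\mathcal{I}_n$ and $\varepsilon>0$.

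First I would record a clean description of the bad set. For a coloring $f$ put $\Phi_f(x)=\bigl(\lambda([-n,x]\cap f^{-1}(1)),\dots,\lambda([-n,x]\cap f^{-1}(k))\bigr)\in\mathbb{R}^k$, so that the coordinates of $\Phi_f(x)$ sum to $x+n$ on $[-n,n]$. A splitting $a=y_0<\dots<y_{r+1}=b$ of $[a,b]\subseteq[-n,n]$ with index partition $\{0,\dots,r\}=A\cup B$ is fair for $f$ exactly when $\sum_{l=0}^{r}\epsilon_l\bigl(\Phi_f(y_{l+1})-\Phi_f(y_l)\bigr)=0$, where $\epsilon_l=+1$ if $l\in A$ and $\epsilon_l=-1$ if $l\in B$; after telescoping this reads $\sum_{l=0}^{r+1}c_l\Phi_f(y_l)=0$, with $c_0=-\epsilon_0$, $c_{r+1}=\epsilon_r$, and $c_l=\epsilon_{l-1}-\epsilon_l$ for $1\le l\le r$. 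Here $\sum_l c_l=0$, the two extreme coefficients are $\pm1$, and the number of nonzero $c_l$ equals $2+(\text{number of sign changes of }\epsilon)\le r+2\le t+2$. Summing the $k$ coordinates of this vector relation yields the single scalar identity $\sum_l c_l y_l=0$, so the relation is equivalent to that one condition on the cut points together with $k-1$ further linear conditions on the values $\Phi_f(y_l)$.

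Hence $g'\in\mathcal{B}_n(t)$ iff, for some $r\le t$ and some admissible index partition (finitely many choices), there is a point $\mathbf{y}=(y_l)$ in the compact set $K_r=\{-n\le y_0\le\dots\le y_{r+1}\le n:\ y_{l+1}-y_l\ge 1/n\}$ with $\sum_l c_l y_l=0$ and $\sum_l c_l\Phi_{g'}(y_l)=0$. Fixing $r$ and the partition, only the at most $t+2$ indices with $c_l\ne0$ enter, so the slice $\widetilde K\subseteq K_r$ cut out by $\sum_l c_l y_l=0$ has dimension at most $(t+2)-1=t+1$, while the remaining $k-1$ conditions amount to the map $\mathbf{y}\mapsto\sum_l c_l\Phi_{g'}(y_l)$ from $\widetilde K$ into the $(k-1)$-dimensional subspace $\{v\in\mathbb{R}^k:\sum_i v_i=0\}$ (in which it indeed lies on $\widetilde K$), which we want to keep off the origin. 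Now I would argue that a small perturbation of $g$ achieves this for all of the finitely many pairs $(r,\text{partition})$ at once: recoloring $g'$ on a tiny interval around a point $x^\ast$ interior to some piece $[y_l,y_{l+1}]$, replacing the color there by a color $j$, shifts $\sum_l c_l\Phi_{g'}(y_l)$ by a nonzero multiple of $e_j-e_{i}$ (where $i$ was the old color and the multiplier is $\pm$ the length), and as $j$ varies these vectors span the whole hyperplane $\{\sum_i v_i=0\}$; so the map is submersive in the $g'$-direction. Choosing the perturbations from a suitable finite-dimensional family (e.g. inserting in each block of $g$ a short interval of a block-dependent color, the lengths being the parameters) and applying Sard's theorem — or a Baire-category argument in a small closed ball about $g$ — the parameter values producing a coloring transverse to the origin on each $\widetilde K$ are generic; for such $g'$ the zero set is empty because $\dim\widetilde K\le t+1<k-1$. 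Finitely many types still leave a nonempty set of good $g'$, each within $\varepsilon$ of $g$.

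The main obstacle is precisely this last step: turning ``a generic perturbation works'' into a rigorous argument, since $\mathcal{M}$ is not a manifold. Concretely one has to pin down an explicit finite list of tiny recolorings of $g$, check that the associated family of cumulative functions is rich enough to make the joint evaluation map submersive over the compact sets $\widetilde K$, and then verify (a Vandermonde-type nonvanishing) that no admissible $\mathbf{y}$ satisfies all $k-1$ balance equations simultaneously. It is exactly here that $k\ge t+3$ is used: it gives $k-1\ge t+2>t+1\ge\dim\widetilde K$, i.e. the balance equations are genuinely overdetermined in the cut points for every splitting of size at most $t$. (The count degenerates to an equality when $k=t+2$, $r=t$, and the partition alternates; and for $t=1$ it is already known that $t+1=2$ colors do not suffice, which is why some slack is needed.)
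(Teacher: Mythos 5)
Your proposal correctly identifies the numerical heart of the matter — the dimension count $k-1 > t+1$, equivalently $k\ge t+3$ — and your telescoping reduction of the fairness condition to a relation $\sum_l c_l\Phi_{g'}(y_l)=0$ with at most $t+2$ nonzero coefficients is a clean way to see why that inequality ought to be what is needed. But the proposal has a genuine gap, and you say so yourself: the passage from ``a generic finite-dimensional perturbation of $g$ works'' to an actual coloring $g'\notin\mathcal{B}_n(t)$ is never carried out. Invoking Sard's theorem requires specifying the finite-dimensional perturbation family, checking enough smoothness of $(\theta,\mathbf y)\mapsto\sum_l c_l\Phi_{g_\theta'}(y_l)$ (which is only piecewise linear and degenerate where cut points cross the support of a perturbation), verifying the joint submersion uniformly over the compact $\widetilde K$, and confirming that the resulting $g'$ stays in the $\varepsilon$-ball. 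None of this is done, and it is precisely where the difficulty lives. A sketch of a transversality heuristic is not a proof of this lemma.

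The paper sidesteps all of this with a concrete, elementary perturbation. Starting from an interval coloring $g$ with blocks $[a_{i-1},a_i)$, it replaces a tiny initial subinterval $S_i=[a_{i-1},b_i)$ of each block by a random $k$-coloring, chosen so that for each color $m$ the lengths $\lambda(I_i^{(m)})$, $1\le i\le N$, are rationally independent; the total modification is at most $N\delta<\varepsilon/2$. Then for any splitting of size $r\le t$ with granularity $\ge1/n$, since $k\ge t+3>r+2$ there is a color $s$ not present at any of the $r+2$ cut points, so every $s$-colored interval is either entirely inside or entirely outside each piece. The granularity bound $\ge 1/n$ together with $\delta<n/N^2$ forces the number of \emph{large} $s$-intervals $(b_i,a_i)$ in $F_1$ and in $F_2$ to coincide; the residual balance would then have to come from the \emph{small} $s$-intervals $I_i^{(s)}$, contradicting their rational independence. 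So $h\notin\mathcal{B}_n(t)$ but $d(g,h)<\varepsilon/2$, giving empty interior. Compared with your plan, this trades the abstract transversality machinery for a single algebraic obstruction (no nontrivial rational relation among the small lengths), which is both rigorous and short. If you want to pursue the Sard-type route, you would at minimum have to make the perturbation family explicit (your suggestion of a short block-dependent colored interval per block is a reasonable start), stratify $\widetilde K$ so the map is smooth on each stratum, and prove the submersion on each stratum — in effect rediscovering, in more cumbersome language, that a generic choice of those short lengths has no rational relations.
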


\begin{proof}
Let $f\in \mathcal{B}_{n}(t)$ be any bad coloring. Let $U(f,\varepsilon )$
be the open $\varepsilon $-neighborhood of $f$ in the space $\mathcal{M}$.
Assume the assertion of the lemma is false: there is some $\varepsilon >0$
for which $U(f,\varepsilon )\subset \mathcal{B}_{n}(t)$. By Lemma \ref%
{IntervalColoring} there is a coloring $g\in \mathcal{I}_{n}$ such that $%
d(f,g)<\varepsilon /2$, so $U(g,\varepsilon /2)\subset \mathcal{B}_{n}(t)$.
The idea is to modify slightly the interval coloring $g$ so that the new
coloring will be still close to $g$, but there will be no intervals inside $%
[-n,n]$ possessing a splitting of size at most $t$ and granularity at least $%
1/n $. Without loss of generality we may assume that there are equally spaced points $-n=a_{0}<a_{1}<\ldots <a_{N}=n$ in the
interval $[-n,n]$ such that each interval $A_{i}=[a_{i-1},a_{i})$ is filled
with a unique color in the interval coloring $g$. Let $\delta >0$ be a real
number satisfying 
\begin{equation*}
\delta <\min \left\{ \frac{\varepsilon }{2N},\frac{n}{N^{2}}\right\} .
\end{equation*}%
Consider another collection of intervals $S_{i}=[a_{i-1},b_{i})$, with $%
a_{i-1}<b_{i}<a_{i}$, such that $\lambda (S_{i})=\delta $ for all $%
i=1,2,\ldots ,N$.

Now split randomly each $S_{i}$ into $k$ subintervals $I_{i}^{(m)}$, $1\leq
m\leq k$. Then with probability one, for every fixed $m$, $1 \leq m \leq k$,
the set $\{\lambda (I_{i}^{(m)}):1\leq i\leq N \}$ is linearly independent
over the rationals, and in particular no two nonempty disjoint subsets of it
have the same total sum. Let $h_{i}$ be a $k$-coloring of $S_{i}$ defined by 
$h_{i}^{-1}(m)=I_{i}^{(m)}$ for every color $m\in \{1,2,\ldots ,k\}$.
Finally, let $h$ be a coloring such that $h=h_{i}$ on $S_{i}$, with $h=g$
everywhere outside the union of $S_{i}$.

First notice that%
\begin{equation*}
d(g,h)\leq \sum\limits_{i=1}^{N}\lambda (S_{i}) = \delta N<\frac{\varepsilon 
}{2},
\end{equation*}%
by the choice of $\delta $. Hence $h$ is in $\mathcal{B}_{n}(t)$. Let $%
[a,b]\subset \lbrack -n,n]$ be an interval with a splitting $%
a=y_{0}<y_{1}<\ldots <y_{r}<y_{r+1}=b$ of size $r\leq t$ and granularity at
least $1/n$. Let $F=F_{1}\cup F_{2}$ be the fair partition of the family $%
F=\{[y_{i},y_{i+1}]:0\leq i\leq r\}$, that is we have%
\begin{equation*}
\lambda _{j}(h,F_{1})=\lambda _{j}(h,F_{2})
\end{equation*}%
for every $1\leq j\leq k$. Since, $k\geq t+3>r+2$, there is a color $s\in
\{1,2,\ldots ,k\}$ that does not appear on any of the points $%
y_{0},y_{1},\ldots ,y_{r+1}$ in coloring $h$. Hence for every open interval $%
I$ filled with color $s$ in coloring $h$ and for every member $Y$ of the
family $F$, either $I$ is completely contained in $Y$, or $I$ and $Y$ are
disjoint.

We distinguish two types of intervals filled with color $s$: those of the
form $(b_{i},a_{i})$ (\emph{large }intervals), and those of the form $%
I_{i}^{(s)}$ (\emph{small} intervals). Let $l_{i}$ denote the number of
large intervals contained in the union of all members of $F_{i}$, $i=1,2$.
We claim that $l_{1}=l_{2}$. Indeed, suppose that this is not the case and
assume (without loss of generality) that $l_{1}>l_{2}$. Denote by $L=\frac{2n%
}{N}$ the common length of intervals $A_{i}=[a_{i-1},a_{i})$. Then%
\begin{equation*}
\lambda _{s}(h,F_{1})\geq l_{1}\left( L-\delta \right) =l_{1}L-l_{1}\delta
\geq l_{1}L-N\delta >l_{1}L-\frac{n}{N},
\end{equation*}%
by the choice of $\delta $. On the other hand,%
\begin{eqnarray*}
\lambda _{s}(h,F_{2}) &\leq &l_{2}L+N\delta \leq (l_{1}-1)L+N\delta
=l_{1}L-L+N\delta \\
&<&l_{1}L-\frac{2n}{N}+\frac{n}{N}=l_{1}L-\frac{n}{N},
\end{eqnarray*}%
again by the initial choice of $\delta $. This is a contradiction, so we
have $l_{1}=l_{2}$.

Since $\lambda _{s}(h,F_{1})=\lambda _{s}(h,F_{2})$, the sum of lengths of
small intervals of color $s$ in $F_{1}$ equals the sum of lengths of small
intervals of color $s$ in $F_{2}$. However this contradicts with the choice
of the numbers $\lambda (I_{i}^{(m)})$ as rationally independent. The proof
of the lemma is complete.
\end{proof}

\section{Generalizations and open problems}

In \cite{Alon} the necklace splitting theorem was generalized to fair
partitions into more than just two collections. A $q$\emph{-splitting} of
size $r$ of the necklace $[a,b]$ is a sequence $a=y_{0}<y_{1}<\ldots
<y_{r}<y_{r+1}=b$ such that it is possible to partition the resulting
collection of intervals $F=\{[y_{i},y_{i+1}]:0\leq i\leq r\}$ into $q$
disjoint subcollections $F_{1},F_{2},\ldots ,F_{q}$, each capturing exactly $%
1/q$ of the the total measure of every color. So, $F=F_{1}\cup F_{2}\cup
\ldots \cup F_{q}$ is a \emph{fair} partition of $F$ into $q$ parts. The
result of \cite{Alon} asserts that every $k$-colored necklace has a $q$%
-splitting of size at most $(q-1)k$, which is clearly best possible.

Notice that we may speak about fair partitions of any collection $F$ of
intervals on the line (not necessarily adjacent) whose interiors are
pairwise disjoint. Modifying slightly the proof of Theorem \ref{Main} one
may obtain the following more general result.

\begin{theorem}
For every $k\geq 4$ there exists a measurable $k$-coloring of the real line
with the following property: there is no family $F$ of closed intervals,
whose members have pairwise disjoint interiors and at most $k-1$ endpoints
in total, such that $F$ has a fair partition into $q$ parts, for any $q\geq
2 $.
\end{theorem}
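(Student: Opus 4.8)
The plan is to mimic the proof of Theorem \ref{Main} almost verbatim, using the same Baire category framework, and to check that the only places where the structure of a \emph{single} interval $[a,b]$ with a size-$r$ splitting was used can be replaced by the structure of an arbitrary finite family $F$ of intervals with disjoint interiors and few endpoints. First I would set up the space $\mathcal{M}$ of measurable $k$-colorings with the same metric $d$, so Lemma \ref{M-complete} carries over unchanged. Then I would redefine the bad sets: for $n,r\ge 1$ and $q\ge 2$ let $\mathcal{B}_{n}^{(r,q)}$ be the set of colorings admitting a family $F$ of closed intervals inside $[-n,n]$, with pairwise disjoint interiors, at most $r$ endpoints in total, granularity (shortest interval length) at least $1/n$, which has a fair partition into $q$ parts; set $\mathcal{B}_n = \bigcup_{q=2}^{n}\bigcup_{r=1}^{k-1}\mathcal{B}_n^{(r,q)}$. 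Since a family with at most $k-1$ endpoints has at most $k-1$ intervals, and since the number of parts $q$ with a nonvacuous fair partition of at most $k-1$ intervals is bounded, this is a finite union, and the desired coloring lives in $\mathcal{M}\setminus\bigcup_{n=1}^\infty \mathcal{B}_n$.

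Next I would verify that each $\mathcal{B}_n^{(r,q)}$ is closed, repeating the argument of Lemma \ref{Bn-closed}: pass to a subsequence along which the combinatorial pattern (which endpoints coincide, and the assignment of the $\le k-1$ intervals to the $q$ parts) is constant, use compactness of $[-n,n]$ to extract convergent endpoints, note that granularity $\ge 1/n$ survives the limit (so the limiting intervals still have disjoint interiors and at most $r$ endpoints), and use the $L^1$-type continuity of $\lambda_i(\cdot,\cdot)$ in both the coloring and the family to conclude that the limit family still admits a fair $q$-partition with the same pattern. This step is routine. Lemma \ref{IntervalColoring} is a statement purely about approximating a coloring by an interval coloring on $[-n,n]$, so it is reused verbatim.

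The heart is the analogue of Lemma \ref{EmptyInt}: assuming $k\ge 4$, show $\mathcal{B}_n$ has empty interior. Given a bad $f$, approximate it by an interval coloring $g\in\mathcal{I}_n$ with blocks $A_i=[a_{i-1},a_i)$ of common length $L=2n/N$, choose $\delta$ tiny (e.g.\ $\delta<\min\{\varepsilon/(2N),\,n/N^2\}$), carve a sub-block $S_i=[a_{i-1},b_i)$ of measure $\delta$ off the left of each $A_i$, split each $S_i$ randomly into $k$ subintervals $I_i^{(m)}$, and let $h$ recolor $S_i$ by $h^{-1}(m)\cap S_i=I_i^{(m)}$ while agreeing with $g$ elsewhere; then $d(g,h)\le N\delta<\varepsilon/2$. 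Now take any admissible family $F$ inside $[-n,n]$ with at most $k-1$ endpoints, granularity $\ge 1/n$, and a fair $q$-partition $F=F_1\cup\dots\cup F_q$. Since $F$ has at most $k-1$ endpoints and $k\ge 4 > (k-1)$ is false in general — here instead one uses that the $k$ colors exceed the number of distinct endpoints, because the endpoints are at most $k-1$ in total, so some color $s$ appears on none of them; hence each open monochromatic $s$-interval of $h$ is either contained in, or disjoint from, each member of $F$. Classify $s$-intervals into \emph{large} ones $(b_i,a_i)$ of length $L-\delta$ and \emph{small} ones $I_i^{(s)}$. Letting $l_j$ count large $s$-intervals inside $\bigcup F_j$, the same granularity/choice-of-$\delta$ estimate forces $l_1=\dots=l_q$ (if some $l_j$ exceeded some $l_{j'}$ by at least $1$, the measure of color $s$ in $\bigcup F_j$ would exceed that in $\bigcup F_{j'}$ by at least $L-N\delta-N\delta>0$, contradiction). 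Equal large-counts plus equal total $s$-measure in each part forces the sums of lengths of small $s$-intervals in the parts to coincide; but these sums are disjoint sub-sums of the rationally independent set $\{\lambda(I_i^{(s)})\}_{i}$, so they cannot coincide unless all parts contain no small $s$-intervals, and then a single block's $S_i$ meeting $\bigcup F$ breaks the tie — contradiction. Hence $h\notin\mathcal{B}_n$, so $U(g,\varepsilon/2)\not\subset\mathcal{B}_n$, and $\mathcal{B}_n$ has empty interior.

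The main obstacle, and the only place the argument genuinely differs from Theorem \ref{Main}, is bookkeeping the combinatorics of a \emph{family} rather than a contiguous partition: one must be careful that ``at most $k-1$ endpoints in total'' bounds both the number of intervals and the number of distinct points $y$ hit, so that a color $s$ avoiding all endpoints still exists (here $k$ strictly exceeds the number of distinct endpoints), and that the finitely-many-patterns argument in the closedness step ranges over all $q\le$ (number of intervals) and all set-partitions of the intervals into $q$ blocks — still finite. Once these finiteness and ``free color'' points are nailed down, the measure estimates and the rational-independence punchline are identical to those in the proof of Lemma \ref{EmptyInt}, and Baire's theorem gives a nonempty $\mathcal{M}\setminus\bigcup_n\mathcal{B}_n$, completing the proof.
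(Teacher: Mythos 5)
The paper gives no explicit proof of this theorem, only the remark that it ``follows by slightly modifying the proof of Theorem \ref{Main},'' and your proposal carries out precisely that modification---the same Baire category framework, the same metric space $\mathcal{M}$, the analogous closed bad sets indexed by the number of endpoints $r$ and the number of parts $q$, the same random-refinement construction of $h$, the same free-color pigeonhole (with ``$k$ colors versus at most $k-1$ distinct endpoints'' replacing ``$k\geq t+3>r+2$''), the same large-interval counting argument, and the same rational-independence punchline. One spot to tighten is the phrase ``a single block's $S_i$ meeting $\bigcup F$ breaks the tie'': to rule out the degenerate case where no part contains a small $s$-interval, you should observe that (by refining the interval coloring, which changes nothing) one may take $N\geq 4n^2$ so that every member of $F$---having length at least $1/n$---fully contains some block $A_i$ and hence some $I_i^{(s)}$, forcing at least one $\sigma_j>0$; this same implicit ``$N$ large'' step is also needed, but left unstated, in the paper's own proof of Lemma \ref{EmptyInt}.
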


For instance, there is a $5$-coloring of the real line such that there is no
pair of intervals (not necessarily adjacent), having the same measure of
every color (this solves an open problem posed in \cite{GrytczukSliwa}). It
is not known whether the constant $5$ is optimal for this property, but it
is not hard to show that two colors are not enough.

Let us go back at the end to the discrete case. By the discrete necklace
splitting theorem (see \cite{GW}, \cite{AlonWest}, \cite{Matousek}), any $k$%
-colored necklace has a splitting of size at most $k$ (provided the number
of beads in each color is even). It is not clear however if the following
discrete version of Theorem \ref{Main} holds.

\begin{problem}
Is it true that for every $k\geq 1$ there is a $(k+3)$-coloring of the
integers such that no segment has a splitting of size at most $k$?
\end{problem}

\end{document}